\theoremstyle{plain}
\newtheorem{theorem}{Theorem}
\newtheorem*{lemma*}{Lemma}
\newtheorem{lemma}[theorem]{Lemma}
\newtheorem*{theorem*}{Theorem}
\newtheorem{proposition}[theorem]{Proposition}
\newtheorem*{proposition*}{Proposition}
\newtheorem*{corollary*}{Corollary}
\newtheorem*{conja}{Conjecture A}
\newtheorem*{conjb}{Conjecture B}
\theoremstyle{definition}
\newtheorem{remark}[theorem]{Remark}
\newtheorem*{remark*}{Remark}
\newtheorem*{definition*}{Definition}
\newtheorem{definition}[theorem]{Definition}
\newtheorem*{example*}{Example}
\numberwithin{equation}{section}
\def\RR{{\mathbb R}}
\def\codim{\operatorname{codim}}
\def\im{\operatorname{im}}
\def\c1{\operatorname{c_1}}
\def\c2{\operatorname{c_2}}
\def\Sym{\operatorname{Sym}}
\def\CC{{\mathbb C}}
\def\QQ{{\mathbb Q}}
\def\PP{{\mathbb P}}
\def\O{{\mathscr O}}
\def\E{{\mathscr E}}
\def\F{{\mathscr F}}
\def\Supp{\operatorname{Supp}}
\def\Alb{\operatorname{Alb}}
\def\+{\oplus}                   
\def\*{\otimes}                  
\def\eff{\operatorname{Eff}}
\def\nef{\operatorname{Nef}}
\def\Pic{\operatorname{Pic}}
\def\Sym{\operatorname{Sym}}
\def\Supp{\operatorname{Supp}}
\def\ME{\operatorname{ME}}
\def\NE{\operatorname{NE}}
\def\Jac{\operatorname{Jac}}
\def\mob{\operatorname{mob}}
\DeclarePairedDelimiter{\floor}{\lfloor}{\rfloor}
\title{On subvarieties with ample normal bundle}
\author{John Christian Ottem}
\address{Department of Pure Mathematics and Mathematical Statistics, University of Cambridge, Wilberforce Road, Cambridge CB3 0WB, UK}
\email{J.C.Ottem@dpmms.cam.ac.uk}
\begin{document}

\begin{abstract}
We show that a pseudoeffective $\RR$-divisor has numerical dimension 0 if it is numerically trivial on a subvariety with ample normal bundle. This implies that the cycle class of a curve with ample normal bundle is big, which gives an affirmative answer to a conjecture of Peternell. We also give other positivity properties of such subvarieties.
\end{abstract}

\maketitle
\thispagestyle{empty}

A well-established principle in algebraic geometry is that geometric properties of an algebraic variety is reflected in the subvarieties which are in various senses `positively embedded' in it. The primary example is the hyperplane section in a projective embedding of the variety, which gives rise to the notion of an ample divisor. However, in higher codimension it is less clear what it should mean in general for a subvariety to be `ample'. In his book \cite{Har70}, Hartshorne considers several approaches, including the condition that the normal bundle of the subvariety should be an ample vector bundle. Even for divisors this condition is weaker than ampleness, in the sense that it is a condition that concerns a vector bundle on the subvariety itself, rather than a global condition on the ambient variety. Still the condition guarantees certain good properties of the subvariety, e.g., that its cycle class is nef.  For a more global definition of ampleness, see \cite{Ott12}.

Our first result is the following theorem, which gives a positive answer to a question of Peternell \cite{Pet11}. The theorem essentially says that a pseudoeffective divisor which is numerically trivial on a subvariety with ample normal bundle is very far from being big.

\begin{theorem}\label{interiortheorem}
Let $X$ be a smooth projective variety over an algebraically closed field of characteristic 0 and let $Y$ be a smooth subvariety of dimension $>0$ with ample normal bundle. If $D$ is a pseudoeffective $\RR$-divisor such that $D|_Y\equiv 0$, then its numerical dimension $\nu(D)$ is 0. In particular, if $D$ is nef, then $D\equiv 0$.
\end{theorem}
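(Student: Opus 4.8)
The plan is to fix an ample divisor $A$ on $X$, taken sufficiently positive, and to bound $h^0\!\big(X,\O_X(\lfloor mD\rfloor+A)\big)$ from above independently of $m$; by the (numerical) definition of $\nu$ this is exactly the assertion $\nu(D)=0$. The last sentence is then immediate: for a nef divisor $\nu(D)=0$ is one of the standard descriptions of the condition $D\equiv0$ (concretely, $D\cdot H^{\dim X-1}=0$ for an ample $H$, and $H^{\dim X-1}$ lies in the interior of the cone of curves, so the nef class $D$ vanishes identically). To estimate the sections I would restrict them to the infinitesimal neighbourhoods $Y_k$ of $Y$, with $\O_{Y_k}=\O_X/\mathcal I_Y^{k+1}$. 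Because $X$ is integral and $Y\subsetneq X$, a nonzero global section of a line bundle has finite order of vanishing along $Y$ (Krull's intersection theorem at the generic point of $Y$), so the restriction map $H^0\!\big(X,\O_X(L)\big)\hookrightarrow\varprojlim_k H^0\!\big(Y_k,L|_{Y_k}\big)$ is injective for every line bundle $L$. Since $Y$ is smooth, $\bigoplus_j\mathcal I_Y^{j}/\mathcal I_Y^{j+1}=\Sym^\bullet(N_{Y/X}\v)$, so $L|_{Y_k}$ carries a filtration with graded pieces $\Sym^j(N_{Y/X}\v)\otimes L|_Y$ for $0\le j\le k$, and hence $h^0\!\big(X,\O_X(L)\big)\le\sum_{j\ge0}h^0\!\big(Y,\Sym^j(N_{Y/X}\v)\otimes L|_Y\big)$ for every line bundle $L$.

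It remains to see that, for $L=\O_X(\lfloor mD\rfloor+A)$, the right-hand side is finite and uniformly bounded in $m$; this is where the hypotheses on $Y$ enter. First, $L|_Y$ is numerically almost constant: writing $D=\sum a_i\Delta_i$ with the $\Delta_i$ prime, one has in $N^1(Y)$ the identity $[L|_Y]=[A|_Y]-i^*[\{mD\}]$, where $\{mD\}=\sum\{ma_i\}\Delta_i$ has coefficients in $[0,1)$ and we used $D|_Y\equiv0$; hence $[L|_Y]$ lies in a fixed bounded subset of $N^1(Y)$ and, being integral, takes only finitely many values as $m$ grows. Choosing $A$ positive enough makes each of these finitely many classes ample, and within a given class the line bundle $L|_Y$ differs from a fixed one by an element of the proper group scheme $\Pic^\tau(Y)$; so by upper semicontinuity each $h^0\!\big(Y,\Sym^j(N_{Y/X}\v)\otimes L|_Y\big)$ is bounded by a constant $c_j$ independent of $m$. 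Finally, since $N_{Y/X}$ is ample and $\dim Y>0$, for $j$ large the bundle $\Sym^j(N_{Y/X}\v)\otimes L|_Y$ is anti-ample: its dual $\Sym^j(N_{Y/X})\otimes(L|_Y)\v$ is ample, because a sufficiently high symmetric power of an ample bundle absorbs any fixed line-bundle twist and is unaffected by a further twist by $\Pic^\tau$. Anti-ample bundles have no global sections, so $c_j=0$ for $j>j_0$, with $j_0$ uniform over the finitely many numerical types that occur. This gives $h^0\!\big(X,\O_X(\lfloor mD\rfloor+A)\big)\le\sum_{j=0}^{j_0}c_j<\infty$ for all $m$, hence $\nu(D)=0$.

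I expect the main obstacle to be precisely this last step — that high symmetric powers of the conormal bundle, twisted by the numerically bounded restrictions $L|_Y$, eventually have no sections — which is where all three hypotheses are genuinely used: ampleness of $N_{Y/X}$; positivity of $\dim Y$ (indeed the theorem fails for a point, where $\Sym^j(N_{Y/X}\v)$ has sections for every $j$); and characteristic $0$, so that $\Sym^j$ of the dual is the dual of $\Sym^j$ (a quotient of an anti-ample bundle need not be anti-ample). A secondary matter requiring care is the $\mathbb R$-coefficient bookkeeping: one must verify that the rounding divisors $\{mD\}$ remain in a bounded region, so that only finitely many numerical classes $[L|_Y]$ occur, and recall that boundedness of $h^0\!\big(X,\O_X(\lfloor mD\rfloor+A)\big)$ for a single sufficiently ample $A$ already forces $\nu(D)=0$ (it then holds for every ample $A$ by monotonicity of $h^0$).
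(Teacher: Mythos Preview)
Your proposal is correct and follows the same core strategy as the paper: bound $h^0\!\big(X,\O_X(\lfloor mD\rfloor+A)\big)$ via the $\mathcal I_Y$-adic filtration, reducing to control of $H^0\!\big(Y,\Sym^j N_{Y/X}^\vee\otimes L|_Y\big)$, and invoke Nakayama's characterisation of $\nu(D)=0$. The one substantive difference is that the paper first cuts $Y$ down to a curve $C$ by intersecting with general ample divisors (so that $N_{C/X}$ is an extension of ample bundles, hence ample, and $D|_Y\equiv0$ becomes $D\cdot C=0$); on a curve the bookkeeping is lighter, since the numerical class of $L|_C$ is simply a degree bounded above independently of $m$, and the vanishing of $H^0(C,\Sym^j N^\vee\otimes L)$ for $j\gg0$ follows from Serre duality plus Fujita vanishing on $\PP(N)$. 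Your direct argument on $Y$ trades this reduction for the standard fact that $\Sym^j\E\otimes M$ is ample for $j\gg0$ when $\E$ is ample (and the observation that this threshold depends only on the numerical class of $M$), together with upper semicontinuity of $h^0$ over $\Pic^\tau(Y)$. Both routes are valid; the paper's is a touch more elementary, while yours avoids checking that the hypotheses survive the reduction.
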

See Definition \ref{numericaldimension} for the precise definition of numerical dimension of a divisor. In particular, this implies that the Iitaka dimension $\kappa(D)$ is non-positive.

Combining this result with the duality theorem of  Boucksom--Demailly--Paun--Peternell \cite{BDPP}, we prove the following result about curves with ample normal bundle. The first part of the theorem was  also conjectured by Peternell \cite{OP04,Pet08,Pet11}.

\begin{theorem}\label{interiortheorem2}
Let $X$ be a smooth projective variety over $\CC$, let $C$ be a smooth curve with ample normal bundle. Then the cycle class of $C$ is big, i.e., it lies in the interior of the cone of curves, $\overline{\NE}(X)$.

If in addition $C$ is strictly nef (i.e., $C\cdot D>0$ for any irreducible divisor $D$), then the cycle class of $C$ lies in the interior of the cone of movable curves, $\overline{\ME}(X)$. In particular, $C$ is numerically equivalent to a $\QQ$-linear combination of strongly movable curves.
\end{theorem}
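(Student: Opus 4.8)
The plan is to deduce both statements from Theorem~\ref{interiortheorem} by a duality argument, using the elementary fact that for a closed pointed convex cone $K$ one has $\operatorname{int}(K^{\vee})=\{\,y:\langle x,y\rangle>0\text{ for all }x\in K\setminus\{0\}\,\}$. Applying this to $K=\Nef(X)\subset N^1(X)$, whose dual is $\overline{\NE}(X)$, shows that a curve class $\alpha$ lies in the interior of $\overline{\NE}(X)$ if and only if $\alpha\cdot H>0$ for every nonzero nef class $H$. Applying it instead to $K=\overline{\Eff}(X)\subset N^1(X)$, the pseudoeffective cone of $\RR$-divisor classes, whose dual is $\overline{\ME}(X)$ by the theorem of Boucksom--Demailly--Paun--Peternell~\cite{BDPP}, shows that $\alpha$ lies in the interior of $\overline{\ME}(X)$ if and only if $\alpha\cdot D>0$ for every nonzero pseudoeffective class $D$.

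For the first assertion, I would let $H$ be a nonzero nef $\RR$-divisor class and suppose for contradiction that $C\cdot H=0$. Since $C$ is a curve, $H|_C$ has degree $C\cdot H=0$, so $H|_C\equiv 0$; as $C$ is smooth of dimension $1>0$ with ample normal bundle and $H$ is nef, the final assertion of Theorem~\ref{interiortheorem} forces $H\equiv 0$, contradicting $H\neq 0$. Hence $C\cdot H>0$ for every nonzero nef $H$, i.e.\ $[C]$ lies in the interior of $\overline{\NE}(X)$.

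For the second assertion, assume moreover that $C$ is strictly nef, and let $D$ be a nonzero pseudoeffective $\RR$-divisor class with $C\cdot D=0$. Again $D|_C\equiv 0$, so Theorem~\ref{interiortheorem} gives $\nu(D)=0$. I would then appeal to Nakayama's structure theory of pseudoeffective divisors: a pseudoeffective divisor of numerical dimension $0$ is numerically equivalent to the negative part of its divisorial Zariski ($\sigma$-)decomposition, so $D\equiv N_\sigma(D)=\sum_{i=1}^{r}a_iN_i$ with $a_i>0$ and $N_1,\dots,N_r$ distinct prime divisors of $X$; moreover $N_\sigma(D)\not\equiv 0$, since $D\not\equiv 0$, so $r\ge 1$. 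Intersecting with $C$ gives $0=C\cdot D=\sum_{i=1}^{r}a_i\,(C\cdot N_i)$, which is impossible because strict nefness of $C$ makes each $C\cdot N_i>0$ while each $a_i>0$. Therefore $C\cdot D>0$ for every nonzero pseudoeffective $D$, so $[C]$ lies in the interior of $\overline{\ME}(X)$.

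For the last sentence, recall that by~\cite{BDPP} the cone $\overline{\ME}(X)$ is the closure of the convex cone spanned by the (integral) classes of strongly movable curves; since $[C]$ is a rational class in the interior of this cone, one can enclose it in a rational simplex with vertices in the cone—each vertex being an interior point of the closure of a convex cone and hence itself a finite nonnegative combination of strongly movable curve classes—so that solving the resulting rational linear system over $\QQ$ exhibits $[C]$ as a nonnegative $\QQ$-linear combination of strongly movable curves. The substantive work is Theorem~\ref{interiortheorem} itself; granting it, the only non-formal input above is Nakayama's characterization of pseudoeffective divisors of numerical dimension zero, and that is the point I would check most carefully—especially that the negative part $N_\sigma(D)$ is genuinely supported on prime divisors of $X$, so that the termwise positivity $C\cdot N_i>0$ coming from strict nefness really applies.
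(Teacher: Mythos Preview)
Your proposal is correct and follows essentially the same approach as the paper: both parts are deduced from Theorem~\ref{interiortheorem} via cone duality, using $\nef(X)^\vee=\overline{\NE}(X)$ for the first and the BDPP duality $\overline{\Eff}(X)^\vee=\overline{\ME}(X)$ for the second, with Nakayama's characterization $\nu(D)=0\iff D\equiv N_\sigma(D)$ (Lemma~\ref{slowgrowth}) providing the effective divisor that contradicts strict nefness. The only cosmetic difference is that the paper cites Lemmas~\ref{slowgrowth} and~\ref{h0bounded} directly in the second part rather than going through Theorem~\ref{interiortheorem}, but since Theorem~\ref{interiortheorem} is itself proved from Lemma~\ref{h0bounded}, this is the same argument.
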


%
%


Interestingly, Voisin  \cite{Voi08} showed that the corresponding result is false for subvarieties of higher dimensions. More precisely, she gives an examples of smooth projective varieties in any dimension $\ge 4$, containing a codimension 2 subvariety with ample normal bundle, but whose class is in the boundary of the pseudoeffective cone.  In these examples,  the subvariety deforms in a family covering the ambient variety and the normal bundle is even globally generated.


The strictly nef assumption in the second part of the theorem is necessary. Indeed, take any smooth projective variety with a curve with ample normal bundle and blow up a point outside it. Then on the blow-up, the preimage of the curve, $C$, has ample normal bundle, but the exceptional divisor satisfies $E\cdot C=0$, so $C$ lies in the boundary of the cone of movable curves.

%

On the other hand, the following theorem says that there can be at most finitely many prime divisors disjoint from $C$.

\begin{theorem}\label{finitelymany}Let $X$ be a smooth projective variety over $\CC$ and let $Y\subset X$ be  a smooth subvariety of dimension at least one with ample normal bundle. Then $Y$ intersects all but finitely many prime divisors on $X$. In fact, the number of such divisors is less than the Picard number of $X$.
\end{theorem}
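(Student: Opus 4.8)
The plan is to bound directly the number of prime divisors disjoint from $Y$. Let $D_1,\dots,D_k$ be distinct prime divisors of $X$ with $D_i\cap Y=\emptyset$ for all $i$; we must show $k<\rho(X)$, the Picard number. We may assume $k\ge 1$, so $Y$ misses a prime divisor and hence $Y\subsetneq X$. The proof rests on two claims: \textbf{(a)} the classes $[D_1],\dots,[D_k]$ are linearly independent in $N^1(X)_\RR$; and \textbf{(b)} they all lie in a common hyperplane of $N^1(X)_\RR$. Together these give $k\le\rho(X)-1$, which is the asserted bound (and in particular $Y$ meets all but finitely many prime divisors).

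Claim (b) is the easy half. Since $\dim Y\ge 1$, pick a curve $C\subseteq Y$ --- any curve works, for instance a general complete intersection of ample divisors on $Y$, or $C=Y$ itself when $\dim Y=1$. As $C$ is disjoint from each $D_i$, we have $D_i\cdot C=0$, so every $[D_i]$ lies in the subspace $\{\gamma\in N^1(X)_\RR : \gamma\cdot[C]=0\}$. This subspace is a hyperplane because $[C]\ne 0$ in $N_1(X)$ (indeed $H\cdot C>0$ for an ample $H$), hence has dimension $\rho(X)-1$; combined with (a), $k\le\rho(X)-1$.

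Claim (a) is where Theorem \ref{interiortheorem} is used. Suppose $\sum_i c_i[D_i]=0$ with $c_i\in\RR$. Separating positive and negative coefficients, put $A=\sum_{c_i>0}c_iD_i$ and $B=\sum_{c_i<0}(-c_i)D_i$; these are effective $\RR$-divisors whose supports are disjoint (they involve disjoint subsets of the distinct primes $D_i$), and $A\equiv B$. Each of $A$ and $B$ is pseudoeffective and, being supported away from $Y$, restricts to the zero divisor on $Y$; so Theorem \ref{interiortheorem} gives $\nu(A)=\nu(B)=0$. Now invoke Nakayama's $\sigma$-decomposition: a pseudoeffective $\RR$-divisor of numerical dimension $0$ is numerically equivalent to its negative part $N_\sigma$, an effective $\RR$-divisor depending only on the numerical class; and for an effective divisor $E$ one has $N_\sigma(E)\le E$, whence, if also $\nu(E)=0$, the difference $E-N_\sigma(E)$ is an effective divisor numerically equivalent to $0$ --- hence $0$, on intersecting with $H^{\,n-1}$ for $H$ ample --- so $E=N_\sigma(E)$. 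Applying this to the effective divisors $A$ and $B$ and using $A\equiv B$ yields $A=N_\sigma(A)=N_\sigma(B)=B$; as their supports are disjoint, $A=B=0$, i.e.\ all $c_i=0$. This proves (a).

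I expect the main obstacle to be having the right rigidity statement available for step (a): that a numerical-dimension-zero pseudoeffective class contains a \emph{unique} effective divisor. This is the only ingredient beyond Theorem \ref{interiortheorem} that is not elementary; the rest is bookkeeping and linear algebra. (One could instead try to prove $A\equiv B\equiv 0$ directly from $\nu(A)=0$ and the disjointness of supports, but Nakayama's theory delivers it cleanly and seems the natural route.)
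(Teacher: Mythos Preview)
Your argument is correct. The overall architecture---the classes $[D_i]$ lie in a hyperplane (claim~(b)) and are linearly independent (claim~(a)), hence there are at most $\rho(X)-1$ of them---is exactly the paper's. The difference is in how the rigidity behind claim~(a) is established. The paper proves that two numerically equivalent effective divisors disjoint from $Y$ are equal by a three-step reduction: first it shows, via the Albanese map, that $H^1(X,\QQ)\to H^1(Y,\QQ)$ is injective, hence $\Pic^0(X)\to\Pic^0(Y)$ has finite kernel; then Matsusaka's theorem upgrades numerical equivalence to algebraic equivalence after multiplying; finally $\kappa\le 0$ (from $\nu=0$) gives uniqueness within the linear system. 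You instead stay entirely inside Nakayama's $\sigma$-decomposition: $\nu(A)=0$ forces $A\equiv N_\sigma(A)$, effectivity gives $N_\sigma(A)\le A$ and hence $A=N_\sigma(A)$, and the numerical invariance of $N_\sigma$ (Nakayama~III.1.7) then yields $A=N_\sigma(A)=N_\sigma(B)=B$. Your route is more economical given the machinery already in play for Theorem~\ref{interiortheorem}, and it avoids the Hodge-theoretic input (Albanese, Matsusaka) entirely; on the other hand, the paper's detour produces the Lefschetz-type statement that $H^1(X,\QQ)\hookrightarrow H^1(Y,\QQ)$, which is of independent interest and not visible from your argument.
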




All of the above results remain valid if $Y$ is assumed to be locally complete intersection instead of smooth.

Thanks to Fr\'ed\'eric Campana and Burt Totaro for comments and useful discussions.

%
%



%
\section{Curves with positivity properties}

Let $X$ be a smooth projective variety. An $\RR$-divisor is a finite sum $D=\sum \lambda_i D_i$ where $\lambda_i\in \RR$ and each $D_i$ is an irreducible divisor in $X$. Write $N^1(X)=\Pic(X)\otimes \RR/\equiv$ for the N\'eron-Severi group of $X$, i.e., the $\RR$-vector space of divisors modulo numerical equivalence. In $N^1(X)$ we define the effective cone $\eff(X)$ as the cone spanned by effective divisors and the nef cone $\nef(X)$ the cone of nef divisors, i.e., $\RR$-divisors such that $D\cdot C\ge 0$ for every curve $C$ on $X$. An $\RR$-divisor is \emph{pseudoeffective} if its class lies in the closure $\overline{\eff}(X)$ of the effective cone.

We let $N_1(X)$ denote the vector space of 1-cycles modulo numerical equivalence, and $\NE(X)$ the cone spanned by curves on $X$. We call a cycle $\alpha\in N_1(X)$ \emph{big} if it lies in the interior of $\NE(X)$. Inside $\NE(X)$, there is the subcone $\ME(X)$ spanned by curves that are \emph{movable}. Here a curve $C$ is called movable if it is a member of a family of curves that dominates $X$.  By definition,  the cones $\nef(X)$ and $\overline{\NE}(X)$ are dual with respect to the intersection pairing. A fundamental result of Boucksom--Demailly--Paun--Peternell\cite{BDPP}, states that for a smooth variety over $\CC$, also the cones $\overline{\ME}(X)$ and $\overline{\eff}(X)$ are dual, i.e., a divisor $D$ is pseudoeffective if and only if $D\cdot C\ge 0$ for all movable curves $C$. Moreover, they show that $\overline{\ME}(X)$ coincides with the closure of the cone spanned by curves which are strongly movable, that is, 1-cycles of the form $f_*(H_1\cap \cdots \cap H_{n-1})$, where the $H_i$ are very ample divisors on $X'$ and $f:X'\to X$ is birational.

\subsection{Subvarieties with ample normal bundle}
Recall that a vector bundle $\E$ on a variety is \emph{ample} if the line bundle $\O(1)$ is ample on $\PP(\E)$. Here and throughout the paper we use the Grothendieck notation for projectivized bundles, i.e., $\PP(\E)$ is the variety of hyperplanes in $\E$. If $\E$ is a vector bundle on a curve, $\E$ is ample if and only if every quotient line bundle of $\E$ has positive degree \cite{Harcurves}. We will mainly consider the case when $\E$ is the normal bundle $N_Y=(\mathcal I/\mathcal I^2)^*$, of a subvariety $Y\subset X$, which is a vector bundle of rank equal to the codimension when $Y$ is smooth (or more generally locally complete intersection.)

Subvarieties with ample normal bundle share many interesting geometric properties  with ample divisors (see e.g., \cite{Har70} or \cite{Laz04}). For example, for every coherent sheaf $\F$, the cohomology groups $H^i(X-Y,\F)$ are finite-dimensional vector spaces for $i<\dim Y$ \cite{Har70}.  Also, if $\dim Y\ge 1$, a result of Napier and Ramachandran \cite{NR98} says $\im(\pi_1(Y)\to \pi_1(X))$ has finite index in $\pi_1(X)$. A property which will be important for our purposes is the following:

\begin{proposition}\cite[Corollary 8.4.3]{Laz04}
Let $Y\subset X$ be a subvariety with ample normal bundle, then $Y$ is nef, i.e., $Y\cdot Z\ge 0$ for any subvariety with $\dim Y+\dim Z=\dim X$.
\end{proposition}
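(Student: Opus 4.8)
The plan is to reduce the inequality $Y\cdot Z\ge 0$, via intersection theory and deformation to the normal cone, to a positivity statement about the ample bundle $N_Y$, and then to invoke the known positivity of the characteristic classes of ample vector bundles.

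One may assume $W:=Y\cap Z\neq\emptyset$, since otherwise $Y\cdot Z=0$. As $Y$ is smooth (or only lci), $i\colon Y\hookrightarrow X$ is a regular embedding of codimension $e=\dim X-\dim Y=\dim Z$, so the intersection number $Y\cdot Z$ is the degree of the refined product $i^{!}[Z]\in A_{0}(W)$. By the formula of Fulton for intersecting a cycle with a regular embedding,
\[
i^{!}[Z]=\bigl\{\,c(N_Y|_W)\cap s(W,Z)\,\bigr\}_{0}=\mathbf 0^{!}_{N_Y|_W}\bigl[C_W Z\bigr],
\]
where $s(W,Z)$ is the Segre class of $W$ in $Z$ and, unwinding the deformation to the normal cone of $Y$ in $X$, the normal cone $C_W Z$ is naturally an effective cone of pure dimension $e$ in the total space of $N_Y|_W$ and $\mathbf 0^{!}$ denotes intersection with the zero section. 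Since the restriction of an ample bundle to a closed subscheme is ample, $N_Y|_W$ is ample of rank $e$, and one is reduced to a statement purely about an ample bundle and an effective cone in its total space. When $Z\subseteq Y$ this is already immediate: then $W=Z$, $C_W Z$ is the zero section, and $Y\cdot Z=\int_Z c_e(N_Y|_Z)>0$ by the Bloch--Gieseker theorem, since $N_Y|_Z$ is an ample bundle of rank equal to $\dim Z$.

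It remains to show: if $\E$ is an ample vector bundle of rank $e$ on a projective scheme $W$ and $\Gamma$ is a nonzero effective cone of pure dimension $e$ in the total space of $\E$, then $\deg\bigl(\mathbf 0^{!}_{\E}[\Gamma]\bigr)\ge 0$. Here $\mathbf 0^{!}_{\E}[\Gamma]=\{c(\E)\cap s(\Gamma)\}_{0}$ is a sum of products of Chern classes of $\E$ with Segre classes of the cone $\Gamma$. Projectivizing $\Gamma$ inside $\PP(\E\oplus\O_W)$ and pushing down to $W$, one rewrites this degree as a sum of pairings of Schur-type polynomials in the Chern classes of $\E$ against effective cycles on $W$ --- the images of the sections of $\Gamma$ by general members of a sufficiently positive, globally generated tautological linear system. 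Each such pairing is $\ge 0$ by the Fulton--Lazarsfeld theorem that the Schur polynomials in the Chern classes of an ample bundle are numerically positive (see \cite[\S 8.3]{Laz04}), and hence $Y\cdot Z\ge 0$. Only the regularity of $Y\hookrightarrow X$ was used, so the same argument works when $Y$ is lci.

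The hard part is precisely this last positivity input, and it genuinely requires the Fulton--Lazarsfeld ``positive polynomial'' machinery: one cannot finish by a naive nefness argument, since the tautological class that governs the Segre class of the cone $\Gamma$ is the one which is nef exactly when $\E^{\vee}$, not $\E$, is nef --- hence not nef here. The nonnegativity comes only from cancellation among the Chern and Segre contributions.
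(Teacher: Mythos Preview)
The paper does not give its own proof of this proposition; it simply cites \cite[Corollary 8.4.3]{Laz04} and moves on. Your sketch correctly reconstructs the argument behind that reference: use Fulton's refined intersection theory (deformation to the normal cone) to write $Y\cdot Z$ as the degree of the cone class $\mathbf 0^{!}_{N_Y|_W}[C_W Z]$ in the ample bundle $N_Y|_W$, and then invoke the Fulton--Lazarsfeld theorem that such cone classes have non-negative degree (Theorem~8.4.1 in \cite{Laz04}), whose proof in turn rests on the positivity of Schur polynomials in the Chern classes of an ample bundle. So your approach and the cited source's approach coincide; the description of the final positivity step is a bit impressionistic, but the key inputs are correctly identified, and your remark that only the lci hypothesis on $Y$ is used is also accurate.
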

In his book \cite{Har70}, Hartshorne presented two of his influential conjectures about such subvarieties:

\begin{conja}[Hartshorne]\label{Hartshorne2}
 Let $Y\subset X$ be a smooth subvariety of $X$ such that the normal bundle $N_{Y}$ is 
an ample vector bundle. Is it true that some multiple of $Y$ deforms (as a cycle) in a family covering 
$X$? 
\end{conja}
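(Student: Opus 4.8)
The plan is to read Conjecture A through the theorems above, which means separating the cases by $\dim Y$ and accepting that only the curve case is within reach of what is already proved. One should first note that a ``bigness'' strategy cannot be the route in general: in Voisin's examples the codimension-two subvariety \emph{does} move in a family dominating $X$, consistently with the conjecture, yet its class sits on the boundary of the pseudoeffective cone, so in codimension $\geq 2$ one cannot expect to produce the covering family by first showing the cycle class is big. I would therefore concentrate on $\dim Y=1$, where Theorem \ref{interiortheorem2} is exactly a bigness (and, under strict nefness, a movability) statement and is the natural engine.

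The cleanest target is: \emph{if $C$ is a strictly nef curve with ample normal bundle, then some multiple $mC$ deforms, as an effective $1$-cycle, in a family dominating $X$}. The steps I would carry out are these. By the second part of Theorem \ref{interiortheorem2}, $[C]$ lies in the interior of $\overline{\ME}(X)$. By the Boucksom--Demailly--Paun--Peternell description of $\overline{\ME}(X)$, an interior class is a strictly positive $\QQ$-combination $[C]\equiv\sum_i q_i[\gamma_i]$ of classes of strongly movable curves $\gamma_i=(f_i)_*(H_{i,1}\cap\cdots\cap H_{i,n-1})$. Each $\gamma_i$ genuinely moves: letting the very ample divisors $H_{i,j}$ vary in their linear systems on $X_i'$ gives a family of complete-intersection curves dominating $X_i'$, whose $(f_i)_*$-images dominate $X$ and all have class $[\gamma_i]$. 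Now clear denominators, write $m[C]\equiv\sum_i n_i[\gamma_i]$ with $n_i\in\ZZ_{>0}$, and take the family of \emph{reducible} $1$-cycles $\sum_i n_i\,\gamma_i(s_i)$ with each $\gamma_i(s_i)$ ranging independently over its dominating family; this family has constant class $m[C]$, and since even one component alone dominates $X$ the whole family dominates $X$. That is precisely Conjecture A for $C$.

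For a curve with ample normal bundle that is \emph{not} strictly nef the obstruction is concentrated on the finitely many prime divisors $E_1,\dots,E_k$ with $E_j\cdot C=0$ (finiteness is Theorem \ref{finitelymany}), and $[C]$ then lies on a proper face of $\overline{\ME}(X)$, so the argument above breaks down. The natural continuation is to seek a birational modification $\pi\colon X'\to X$ that extracts or contracts the $E_j$ so that the transform $C'$ becomes strictly nef, with $N_{C'}$ still ample (automatic when the $E_j$ are disjoint from $C$ and $\pi$ is an isomorphism near $C$, as in the blow-up example of the paper), apply the strictly nef case on $X'$, and push the dominating family down by $\pi$; surjectivity of $\pi$ keeps the pushed-down family dominating $X$. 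I expect the existence of such a model to be the main obstacle: outside low dimensions there is no reason the $E_j$ can be contracted simultaneously, and for $\dim Y\geq 2$ the bigness input is simply unavailable (Voisin), so an affirmative answer in that range would have to come from constructing the covering family directly from the positivity of $N_Y$ --- for instance by a deformation argument permitting reducible degenerations into the locus where $N_Y$ fails to be globally generated --- rather than from any of the cone-duality statements invoked above.
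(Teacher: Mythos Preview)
The paper does not prove Conjecture~A; it records it as a conjecture, notes that Fulton--Lazarsfeld showed it is \emph{false} when $\dim Y\ge 2$, and says explicitly that the curve case ``is open in general''. What the paper does prove (Theorem~\ref{interiortheorem2}) it calls only ``a weak form of Hartshorne's Conjecture~A'': for any finite set of points there is an effective cycle \emph{numerically equivalent} to $mC$ passing through them. So there is no ``paper's own proof'' to compare against.

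Your argument in the strictly nef case contains a genuine gap, and it is exactly the gap that separates Theorem~\ref{interiortheorem2} from Conjecture~A. From $[C]\in\mathrm{int}\,\overline{\ME}(X)$ you obtain a relation $m[C]\equiv\sum_i n_i[\gamma_i]$ in $N_1(X)$ and then exhibit the dominating family $\sum_i n_i\gamma_i(s_i)$. But this family need not contain $mC$ as a member: its members are unions of pushforwards of complete-intersection curves on various birational models, and nothing forces any such cycle to specialise to the fixed curve $mC$. Conjecture~A asks that $mC$ itself lie in a covering algebraic family, i.e.\ that $mC$ be \emph{algebraically} equivalent to the generic member, not merely numerically equivalent. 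Numerical equivalence of $1$-cycles does not imply algebraic equivalence (already on an abelian surface with complex multiplication one has numerically equivalent curves that are not algebraically equivalent), so the sentence ``That is precisely Conjecture~A for~$C$'' is where the proof breaks. The paper is careful on this point: it only claims that a multiple of $[C]$ can be written as $h+e$ with $h$ a complete-intersection class and $e$ effective, and draws the numerical consequence, without asserting that $mC$ deforms.

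Your remarks on the non--strictly-nef case and on $\dim Y\ge 2$ are reasonable heuristics, but since the base case already fails, the proposed reduction strategy does not yield a proof even for strictly nef curves.
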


\begin{conjb}[Hartshorne]\label{Hartshorne1}
Let $Y,Z$ be smooth subvarieties of $X$ with such that the normal bundles $N_Y,N_Z$ are ample vector bundles. If $\dim Y+\dim Z\ge \dim X$, then $Y\cap Z\neq \emptyset$.
\end{conjb}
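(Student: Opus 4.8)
The plan is to derive the conjecture from Theorem~\ref{interiortheorem} in the case where one of the two subvarieties is a divisor, and to isolate the higher-codimension situation as the genuine obstacle. I may assume $n:=\dim X\ge 2$, since for $n=1$ the hypothesis forces $Y=Z=X$. Suppose first that $Z$ is a divisor, and assume for contradiction that $Y\cap Z=\emptyset$. Then the section of $\O_X(Z)$ cutting out $Z$ is nowhere vanishing on $Y$, so $\O_X(Z)|_Y\cong\O_Y$ and in particular $Z|_Y\equiv 0$. As $Z$ is effective it is pseudoeffective, so Theorem~\ref{interiortheorem} (applicable because $N_Y$ is ample and $\dim Y>0$) gives $\nu(Z)=0$. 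But the hypothesis that $N_Z=\O_X(Z)|_Z$ is ample contradicts this: by \cite[Corollary~8.4.3]{Laz04} the class of $Z$ is nef, and for any ample divisor $H$ the projection formula yields
\[
Z^{2}\cdot H^{\,n-2}=\int_Z N_Z\cdot\bigl(H|_Z\bigr)^{\,n-2}>0,
\]
since $N_Z$ and $H|_Z$ are ample on the positive-dimensional variety $Z$. For a nef class this means $\nu(Z)\ge 2$, the desired contradiction. Hence $Y\cap Z\ne\emptyset$ whenever $Z$ (or symmetrically $Y$) has codimension one; note that here the dimension hypothesis is automatic and no reduction is needed.

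For the remaining case, in which both $Y$ and $Z$ have codimension $\ge 2$, I would first reduce to complementary dimension $\dim Y+\dim Z=n$ by replacing the larger subvariety, say $Y$, with a general linear section $Y'=Y\cap H_1\cap\dots\cap H_r$. Such a $Y'$ again has ample normal bundle, because $N_{Y'/X}$ is an extension
\[
0\to\textstyle\bigoplus_i\O_{Y'}(H_i)\to N_{Y'/X}\to N_{Y/X}|_{Y'}\to 0
\]
of ample bundles, and an extension of ample bundles is ample; and since $Y'\cap Z\subseteq Y\cap Z$, nonemptiness for $Y'$ suffices. After this reduction the outstanding configurations are exactly those with $\min(\codim Y,\codim Z)\ge 2$.

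The natural attempt in this case is to blow up, $\pi\colon\tilde X\to X$ along $Z$, with exceptional divisor $E$. Since $Y\cap Z=\emptyset$, the strict transform $\tilde Y$ is disjoint from $E$, still has ample normal bundle, and satisfies $E|_{\tilde Y}\equiv 0$; moreover ampleness of $N_Z$ is equivalent to $\O_{\tilde X}(-E)$ restricting to an ample bundle on $E$. However, feeding $E$ into Theorem~\ref{interiortheorem} yields only $\nu(E)=0$, which is precisely what one expects for a $\pi$-exceptional (hence numerically rigid) divisor and produces no contradiction: the positivity of $N_Z$ lives on $-E|_E$, not on any globally pseudoeffective class trivial on $Y$. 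I expect this to be the crux, and I believe the obstruction is structural rather than technical. Indeed Voisin's examples show that the cycle class of a codimension-two subvariety with ample normal bundle need not be big, so one cannot hope to force $[Y]\cdot[Z]>0$ by numerical positivity alone; a non-numerical input, of formal-geometric or connectedness type, appears to be needed. I would therefore regard the conjecture as open in this range, with Theorem~\ref{interiortheorem} giving the full statement exactly when a divisor is present.
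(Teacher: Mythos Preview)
The statement is presented in the paper as Conjecture~B of Hartshorne, an \emph{open} conjecture; the paper gives no proof and does not claim one. It only records that Conjecture~A implies Conjecture~B by Fulton--Lazarsfeld, and that Conjecture~A itself fails in general for $\dim Y\ge 2$.

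Your proposal is accordingly not a proof of the full conjecture, and you say so explicitly. What you do establish is correct: when one of $Y,Z$ is a divisor, the conclusion follows from Theorem~\ref{interiortheorem}, since a divisor $Z$ with ample normal bundle is nef and big (the paper notes this in its Example~(i), and your computation $Z^2\cdot H^{n-2}>0$ recovers it directly), hence cannot have $\nu(Z)=0$. This special case is not singled out in the paper, but it is a legitimate and clean corollary of its main theorem. Your reduction to complementary dimension via general hyperplane sections is also correct, and your analysis of why the blow-up approach stalls in higher codimension --- the exceptional divisor $E$ trivially has $\nu(E)=0$, while the positivity of $N_Z$ lives on $-E|_E$ --- is accurate and consonant with the paper's own caveats about codimension~$\ge 2$ (and with Voisin's examples). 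There is no gap to name: you have correctly identified that the conjecture remains open beyond the divisor case, which is exactly the paper's position.
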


It is known by results of \cite{FL81} that Conjecture A implies Conjecture B. Unfortunately, Fulton and Lazarsfeld also showed that Conjecture A is false in general when $\dim Y\ge 2$. Their counterexample is based on constructing a certain ample rank two vector 
bundle on $Y=\PP^2$, so that no multiple of the zero-section moves in the total space of the bundle. 

Given this, one asks whether it is still true that a \emph{curve} $C$ with ample normal bundle 
has a multiple that moves in $X$. This question is open in general, but is known when $X$ is a surface or when $g(C)\le 1$. Further evidence for this is given by the result of Campana and Flenner \cite{CF90}, which states that some multiple of the zero-section moves in the normal bundle. In particular, this implies that the example of Fulton and Lazarsfeld cannot be modified to the dimension 1 case.

Theorem \ref{interiortheorem2} can be viewed as a weak form of Hartshorne's Conjecture A. Indeed, since the cycle class of $C\subset X$ is big, a multiple of it can be written as $h+e$ where $h$ is the class of a complete intersection of $n-1$ sufficiently ample divisors and $e$ is an effective 1-cycle. In particular, for any finite set of points in $X$, there is an effective cycle numerically equivalent to $mC$ which passes through them.  Note also that if $C$ is in addition assumed to be strictly nef, Theorem \ref{interiortheorem2} implies that some integral multiple of $C$ is even equivalent to a sum of strongly movable curves in $X$.

%
%

\subsection{Examples}

(i) If $D$ is a divisor with ample normal bundle, then $D$ is nef and big \cite{Voi08}. In particular, when $X$ is a surface, Theorems \ref{interiortheorem} and \ref{finitelymany} follow directly from the Hodge index theorem. 

(ii) If $Y$ is a complete intersection, or more generally, a transverse intersection of subvarieties with ample normal bundle, then $N_Y$ is ample. 

 (iii) Any smooth subvariety of projective space has ample normal bundle, since it is the quotient of the tangent bundle $T_{\PP^n}$ which is ample \cite{Laz04}. 


(iv) If the normal bundle $N_C$ is sufficiently ample in the sense that $h^0(N_C)\ge \dim X-1$ and $h^1(N_C)=0$, then the curve $C$ itself moves in a family covering $X$. In this case it is known that $C$ is big by \cite[Theorem 4.11]{Pet11}. In particular, this holds when $C$ is rational or elliptic. In fact, a variety is rationally connected if and only if it contains a rational curve with ample normal bundle. In \cite{OP04}, Oguiso and Peternell give an analogous geometric characterization when $C$ is an elliptic curve in a threefold.

 (v) If $C$ has genus $g\ge 2$, we can consider the embedding of $C$ in its Jacobian $\Jac(C)$. Here the normal bundle of $C$ is ample \cite{Laz04}.  In this example, it is classically known that the cycle-class of $C$ is in the interior of the cone of curves of $\Jac(C)$. In fact, Poincare's formula gives that $C\equiv \frac1{(g-1)!}\Theta^{g-1},$ where $\Theta$ is the theta divisor of $\Jac(C)$, which is ample.
 
(vi) If $X$ is a homogenous manifold, then the ampleness of the normal bundle of a subvariety can often be interpreted geometrically. For example, $Y$ is non-degenerate. If $X$ is an abelian variety and $C$ is a curve, then $N_C$ is ample if and only if a translate of $C$ generates $X$ as a group \cite{Laz04}. If $X$ is a quadric, then by \cite[Theorem 1]{Ballico}, the normal bundle $N_{C}$ is ample if and only if $C$ is not a line. In general, a line in a homogeneous manifold has ample bundle if and only if $X=\PP^n$.

(vii) Bigness of the cycle class of $C$ has however no implications for the positivity of the normal bundle. Indeed, take any 3-fold with Picard number one containing a $(-1,-1)$ curve: then $N_C=\O(-1)\oplus \O(-1)$, and $C$ is big because $\overline{\NE}(X)$ is 1-dimensional.

\section{Proof of Theorem \ref{interiortheorem} and \ref{interiortheorem2}}

\subsection{Divisorial Zariski decomposition}

We briefly recall the divisorial Zariski decomposition introduced by Boucksom \cite{Bou04} and Nakayama \cite{Nak04}.  Let $X$ be a smooth projective variety and let $D$ be a pseudoeffective $\RR$-divisor. We define the \emph{diminished base locus} of $D$ by $$\mathbf B_-(D)=\bigcup_{A} \mathbf B_\RR(D+A),$$ where $A$ runs over all ample divisors and $\mathbf B_\RR(D)=\bigcap\{\Supp(D') | D'\ge 0\mbox{ and } D'\sim_\RR D\}$. By \cite[Theorem V.1.3]{Nak04}, $\mathbf B_-(D)$ is a countable union of closed subsets. Let $H$ be an ample line bundle on $X$. For each prime divisor $\Gamma$ on $X$ define the coefficient
$$\sigma_\Gamma(D)=\lim_{\epsilon\to 0^+}\inf\{\mbox{mult}_\Gamma(D') | D'\sim_\RR D+\epsilon H \mbox{ and }D'\ge 0\}
$$It was shown by Nakayama \cite[III.1.5]{Nak04} that these numbers do not depend on the choice of $H$ and that there are only finitely many prime divisors $\Gamma$ such that $\sigma_{\Gamma}(D)>0$. Following \cite{Nak04} we then define $N_\sigma(D)=\sum_\Gamma \sigma_\Gamma(D)\Gamma$ and $P_\sigma(D)=D-N_\sigma(D)$, and call $D=N_\sigma(D)+P_\sigma(D)$ the \emph{divisorial Zariski decomposition} of $D$. 

The main properties of this decomposition is captured by the following 

\begin{proposition}\cite[III.1.4, III.1.9, V.1.3]{Nak04} Let $D$ be a pseudoeffective $\RR$-divisor.
\begin{enumerate}[(i)]
\item $N_\sigma(D)$ is effective and $\Supp(N_
\sigma(D))$ coincides with the divisorial part of $\mathbf B_-(D)$.
\item $N_\sigma(D)=0$ when $D$ is nef.
\item For all $m\ge 0$, $H^0(X,\O_X(\floor{mP_\sigma(D)}))\simeq H^0(X,\O_X(\floor{mD}))$
\end{enumerate}
\end{proposition}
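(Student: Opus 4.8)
The plan is to establish the three properties in turn, with the nonnegativity and monotonicity of the coefficients $\sigma_\Gamma$ carrying most of the weight. First I would record two elementary properties of $\sigma_\Gamma$ on big classes, where it is given by $\sigma_\Gamma(B)=\inf\{\operatorname{mult}_\Gamma(D') : D'\sim_\RR B,\ D'\ge 0\}$: it is \emph{subadditive}, $\sigma_\Gamma(B_1+B_2)\le\sigma_\Gamma(B_1)+\sigma_\Gamma(B_2)$ (add effective representatives), and it \emph{vanishes on ample classes}, $\sigma_\Gamma(A)=0$ (a large multiple $mA$ is globally generated, so $|mA|$ has a member not containing $\Gamma$, and rescaling produces $\RR$-representatives of arbitrarily small multiplicity along $\Gamma$). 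Writing $D+\epsilon H=(D+\epsilon'H)+(\epsilon-\epsilon')H$ for $\epsilon>\epsilon'$ then shows $\epsilon\mapsto\sigma_\Gamma(D+\epsilon H)$ is nonincreasing in $\epsilon$ and bounded below by $0$; hence the limit defining $\sigma_\Gamma(D)$ exists, equals $\sup_{\epsilon>0}\sigma_\Gamma(D+\epsilon H)$, and is $\ge 0$. This proves effectivity of $N_\sigma(D)=\sum_\Gamma\sigma_\Gamma(D)\Gamma$ at once, and, using the finiteness of the sum recalled above, that $N_\sigma(D)$ is a genuine $\RR$-divisor.

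Statement (ii) is then immediate: if $D$ is nef then $D+\epsilon H$ is ample for every $\epsilon>0$, so the inner infimum $\sigma_\Gamma(D+\epsilon H)$ vanishes, whence $\sigma_\Gamma(D)=\lim_{\epsilon\to 0^+}0=0$ for every $\Gamma$, i.e.\ $N_\sigma(D)=0$.

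For the support identity in (i) I would compare $\{\Gamma:\sigma_\Gamma(D)>0\}$ with the divisorial components of $\mathbf B_-(D)=\bigcup_A\mathbf B_\RR(D+A)$. Unwinding the definition, a prime divisor $\Gamma$ lies in $\mathbf B_\RR(D+A)$ exactly when \emph{every} effective $D'\sim_\RR D+A$ satisfies $\operatorname{mult}_\Gamma(D')>0$. One implication is easy: if $\sigma_\Gamma(D)>0$ then by monotonicity already $\sigma_\Gamma(D+\epsilon_0H)=\inf_{D'\sim_\RR D+\epsilon_0H}\operatorname{mult}_\Gamma(D')>0$ for some $\epsilon_0>0$, so every such $D'$ contains $\Gamma$ and hence $\Gamma\subseteq\mathbf B_\RR(D+\epsilon_0H)\subseteq\mathbf B_-(D)$. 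The reverse implication is the delicate point, and I expect it to be the \textbf{main obstacle}: from ``$\Gamma\subseteq\Supp(D')$ for all effective $D'\sim_\RR D+A$'' one must deduce that the \emph{infimum} of $\operatorname{mult}_\Gamma(D')$ is strictly positive, not merely that each value is. Here I would invoke Kodaira's lemma to write $D+A\sim_\RR A_0+E_0$ with $A_0$ ample and $E_0\ge 0$, and realize $\sigma_\Gamma$ as the asymptotic order $\lim_m\tfrac1m\operatorname{mult}_\Gamma\!\big(\mathrm{Fix}|m(D+A)|\big)$ controlled by the finite-dimensional systems $|m(D+A)|$; positivity of this limit is equivalent to $\Gamma$ being a fixed divisorial component, i.e.\ to $\Gamma\subseteq\mathbf B_\RR(D+A)$. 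This is exactly the bridge from pointwise positivity to positivity of the infimum.

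Finally, for (iii), since $N_\sigma(D)\ge 0$ the superadditivity of the floor gives $\floor{mD}-\floor{mP_\sigma(D)}\ge\floor{mN_\sigma(D)}\ge 0$, so $\floor{mD}-\floor{mP_\sigma(D)}$ is effective and multiplication by its defining section yields an injection $H^0(X,\O_X(\floor{mP_\sigma(D)}))\hookrightarrow H^0(X,\O_X(\floor{mD}))$. For surjectivity I would take $s\in H^0(X,\O_X(\floor{mD}))$ with effective divisor $E_s$, form $E_s+\{mD\}\sim_\RR mD$, and apply the characterization of $\sigma_\Gamma$: perturbing by an effective representative $G$ of $\epsilon H$ chosen to avoid $\Gamma$ and letting $\epsilon\to 0$ gives $\operatorname{mult}_\Gamma(E_s+\{mD\})\ge m\sigma_\Gamma(D)$ for every $\Gamma$. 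After the routine floor bookkeeping this yields $E_s\ge\floor{mD}-\floor{mP_\sigma(D)}$, so $s$ vanishes along $\floor{mD}-\floor{mP_\sigma(D)}$ and descends to a section of $\O_X(\floor{mP_\sigma(D)})$, producing the reverse inclusion and hence the isomorphism.
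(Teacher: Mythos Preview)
The paper does not give its own proof of this proposition: it is quoted verbatim from Nakayama \cite[III.1.4, III.1.9, V.1.3]{Nak04} and stated without argument, so there is nothing in the paper to compare your sketch against. What you have written is essentially a correct reconstruction of Nakayama's proofs. Part (ii) is exactly right; the injection in (iii) is immediate, and your surjectivity argument is sound once one notes homogeneity $\sigma_\Gamma(mD)=m\,\sigma_\Gamma(D)$ and that $\operatorname{mult}_\Gamma(E_s)$ is an \emph{integer}, which upgrades the inequality $\operatorname{mult}_\Gamma(E_s)\ge m\sigma_\Gamma(D)-\operatorname{mult}_\Gamma(\{mD\})$ to the required comparison with $\floor{mD}-\floor{mP_\sigma(D)}$.

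The one genuine gap, which you correctly flag, is the reverse inclusion in (i). Your proposed bridge via $\lim_m\tfrac1m\operatorname{mult}_\Gamma\mathrm{Fix}\,|m(D+A)|$ tacitly treats $D+A$ as an integral (or $\QQ$-) divisor; for an $\RR$-class the linear systems $|m(D+A)|$ are not defined, and ``infimum zero'' does not automatically give a representative with multiplicity exactly zero. Nakayama's fix is to perturb: since bigness is open, one writes the big $\RR$-class $D+A$ as a finite positive $\RR$-combination of nearby big $\QQ$-classes $B_j$ with $\sigma_\Gamma(B_j)=0$ (using continuity of $\sigma_\Gamma$ on the big cone), and for each $B_j$ the Noetherian/finite-dimensionality argument on the graded linear series $\bigoplus_m H^0(X,\O_X(mB_j))$ does produce an effective $\QQ$-divisor with $\operatorname{mult}_\Gamma=0$; summing gives an effective $D'\sim_\RR D+A$ with $\Gamma\not\subseteq\Supp(D')$, hence $\Gamma\not\subseteq\mathbf B_\RR(D+A)$. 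With that lemma in hand your outline goes through.
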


\begin{definition}\label{numericaldimension}
 Let $D$ be a pseudoeffective $\RR$-divisor. For an ample divisor $H$ define $\nu(D,H)$ as the maximal non-negative integer $k$ such that 
 $$
\limsup_{m\to\infty}\frac{h^0(X,\O_X(\floor{mD}+H)}{m^k}>0
 $$We define the \emph{numerical dimension} $\nu(D)$ has the maximal value of $\nu(D,H)$ when $H$ varies over all ample divisors on $X$. (Although the paper \cite{BDPP} uses a different definition of $\nu(D)$, it is equivalent to ours by the main theorem in \cite{Leh11}.)
\end{definition}

%

\begin{lemma}\label{slowgrowth}\cite[Proposition V.2.7]{Nak04}
Let $X$ be a smooth projective variety and let $D$ be a pseudoeffective $\RR$-divisor. Then $\nu(D)=0$ if and only if $D\equiv N_\sigma(D)$.
\end{lemma}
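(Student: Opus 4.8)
The plan is to strip off the negative part and then induct on $\dim X$. Set $P:=P_\sigma(D)$. The first step is to observe that $\nu(D)=0$ if and only if $\nu(P)=0$: since $\lfloor mD\rfloor\ge\lfloor mP\rfloor$ one gets the inclusion $\O_X(\lfloor mP\rfloor+H)\hookrightarrow\O_X(\lfloor mD\rfloor+H)$ and hence $\nu(D,H)\ge\nu(P,H)$ for every ample $H$ for free, while the reverse inequality $\nu(D)\le\nu(P)$ — that the negative part contributes nothing to the growth of $h^0(X,\O_X(\lfloor mD\rfloor+H))$ — is where one genuinely uses the $\sigma$-decomposition, via parts (i)--(iii) of the proposition on the divisorial Zariski decomposition recalled above together with the monotonicity $\sigma_\Gamma(D+A)\le\sigma_\Gamma(D)$ for ample $A$ (the point being that almost all of $m\,N_\sigma(D)$ is forced into the stable base locus of $mD+H$). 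Granting this, and noting that $D\equiv N_\sigma(D)$ is the same as $P\equiv 0$, the lemma reduces to: \emph{for a pseudoeffective $D$ with $N_\sigma(D)=0$, one has $\nu(D)=0\iff D\equiv 0$.}

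For such $D$ the implication $D\equiv 0\Rightarrow\nu(D)=0$ is easy: then $\lfloor mD\rfloor+H$ has numerical class $H-\{mD\}$, where $\{mD\}:=mD-\lfloor mD\rfloor$ is supported on the finitely many components of $D$ and has coefficients in $[0,1)$, so for $H$ sufficiently ample $h^0(X,\O_X(\lfloor mD\rfloor+H))$ stays bounded; as this holds for all ample $H$, $\nu(D)=0$. The content is the converse, which I would prove in contrapositive form by induction on $n=\dim X$: \emph{if $D$ is pseudoeffective with $N_\sigma(D)=0$ and $D\not\equiv 0$, then $\nu(D)\ge 1$}. For $n=1$, $D$ is an effective divisor of positive degree and $h^0(X,\O_X(\lfloor mD\rfloor+H))$ grows linearly in $m$ by Riemann--Roch. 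For $n\ge 2$, fix a very ample $H$ and a very general $S\in|m_0H|$ with $m_0\gg 0$, and establish: (a) $D|_S$ is pseudoeffective on $S$; (b) $N_\sigma(D|_S)=0$, because $\mathbf B_-(D)$ has no divisorial component, so for general $S$ its trace on $S$ has codimension $\ge 2$ in $S$, while $\mathbf B_-(D|_S)\subseteq\mathbf B_-(D)\cap S$; (c) $D|_S\not\equiv 0$, since $D|_S\cdot(H|_S)^{n-2}=m_0\,(D\cdot H^{n-1})>0$, using that any nonzero pseudoeffective class has strictly positive intersection with $H^{n-1}$ (which one sees either by a positivity-of-currents argument, or, splitting off the negative part, by induction on dimension). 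The inductive hypothesis applied on $S$ then gives $\nu(D|_S)\ge 1$, hence $h^0\bigl(S,\O_S(\lfloor mD|_S\rfloor+A_S)\bigr)\gtrsim m$ for some ample $A_S$ on $S$. Choosing $c\gg 0$ so that $(1+m_0+c)H|_S-A_S$ is effective, and using $\lfloor mD\rfloor|_S=\lfloor mD|_S\rfloor$ for general $S$, one lifts these sections to $X$ along
\[
0\to\O_X\bigl(\lfloor mD\rfloor+(1+c)H\bigr)\to\O_X\bigl(\lfloor mD\rfloor+(1+m_0+c)H\bigr)\to\O_S\bigl(\lfloor mD|_S\rfloor+(1+m_0+c)H|_S\bigr)\to 0
\]
and concludes $h^0\bigl(X,\O_X(\lfloor mD\rfloor+(1+m_0+c)H)\bigr)\gtrsim m$, i.e. $\nu(D)\ge 1$.

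The main obstacle is making the restriction to $S$ rigorous, and there are two delicate points. First, $\mathbf B_-(D)$ is only a \emph{countable} union of closed sets, so step (b) requires a restriction theorem for diminished base loci (in the spirit of Ein--Lazarsfeld--Musta\c{t}\u{a}--Nakamaye--Popa) and a choice of $S$ avoiding countably many subvarieties simultaneously. Second, the lifting step needs control of $h^1$ of the kernel line bundle $\O_X(\lfloor mD\rfloor+(1+c)H)$, whose numerical class drifts with $m$ along the boundary of the pseudoeffective cone, so a plain Serre-vanishing argument is unavailable and one must instead argue through restricted volumes or perturb to reduce to a big class. Together with the reduction $\nu(D)=\nu(P_\sigma(D))$ of the first step, these are exactly the places where Nakayama's Proposition V.2.7 rests on the full machinery of the $\sigma$-decomposition; a genuinely self-contained proof would have to redevelop that apparatus.
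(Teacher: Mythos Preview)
Your approach---reduce to $N_\sigma(D)=0$, then induct on $\dim X$ by restricting to a very ample hypersurface and using an $H^1$-vanishing to transfer $h^0$-growth between $X$ and the hypersurface---is essentially the paper's. The paper only writes out the nef case, where your preliminary reduction and step~(b) are automatic and Fujita's theorem supplies the uniform vanishing $H^1(X,\O_X(mD+(k-1)H))=0$; for general pseudoeffective $D$ it simply defers to Nakayama with the remark that ``essentially the same idea can be used, but a different vanishing theorem is required,'' which is exactly the obstacle you isolate in your final paragraph. (One cosmetic difference: the paper runs the induction directly---surjectivity of restriction forces $\nu(D|_H,kH|_H)=0$, hence $D|_H\equiv 0$---whereas you argue the contrapositive by lifting sections from $S$; both formulations hinge on the same $H^1$-vanishing.)
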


Since this result is vital in the proof of Theorem 1, we give a proof in the case $D$ is a nef divisor. In fact, this special case is enough to prove the first part of Theorem 2. We will prove the following statement: If $H$ is a smooth very ample divisor, then $D\equiv 0$ if and only if for all sufficiently large $k$, $\nu(D,kH)=0$. We'll use the observation that $D\equiv 0$ if and only if $D|_H\equiv 0$ (which comes from the fact that $D\equiv 0$ if and only if $D\cdot H^{n-1}=0$). By Fujiita's vanishing theorem, there is a $k_0$ such that $H^1(X,\O_X(mD+(k-1)H))=0$ for all $m\ge0$ and $k\ge k_0$. Consider now the restriction map
$$H^0(X,\O_X(mD+kH))\to H^0(H,\O_H(mD+kH)).$$By construction, this map is surjective for every $m\ge 0,k\ge k_0$, so in particular also $\nu(D|_H,kH|_H)=0$ for all $k\ge k_0$. By induction on the dimension, $D|_H\equiv 0$ and hence also $D\equiv 0$. 

When $D$ is only pseudoeffective, essentially the same idea can be used, but a different vanishing theorem is required (cf. \cite{Nak04}).

\begin{lemma}\label{h0vanishing}
 Let $\E$ be an ample vector bundle on a curve $C$ and let $d$ be an integer. Then there is an integer 
 $m_0=m_0(d)>0$ so that 
 $$
 H^0(C,\Sym^m \E^* \otimes  \O_C(L))=0
 $$for all $m\ge m_0$, and all line bundles $L$ of degree $d$.
\end{lemma}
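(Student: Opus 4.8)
The plan is to translate the vanishing into a statement about the minimal slope of the bundles $\Sym^m \E$, and then exploit that ampleness of $\E$ forces this slope to grow linearly in $m$, eventually overtaking the bounded number $d$. First I would reduce to quotient line bundles. Since we are in characteristic $0$ there is a natural isomorphism $\Sym^m \E^* \cong (\Sym^m \E)^*$, so a nonzero element of
$$H^0(C,\Sym^m \E^* \otimes \O_C(L)) = \Hom_{\O_C}(\Sym^m \E,\O_C(L))$$
is a nonzero morphism $\varphi\colon \Sym^m \E \to \O_C(L)$. On the smooth curve $C$ the image of $\varphi$ is a subsheaf of the line bundle $\O_C(L)$, hence of the form $\O_C(L-E)$ for an effective divisor $E$; in particular it is a line bundle $Q$ with $\deg Q \le \deg \O_C(L) = d$, and $Q$ is a (locally free) quotient of $\Sym^m \E$, hence of $\E^{\otimes m}$ via the natural surjection $\E^{\otimes m}\twoheadrightarrow\Sym^m\E$. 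Thus a nonzero section forces $\E^{\otimes m}$ to admit a quotient line bundle of degree $\le d$.

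Next I would bound such quotients from below. Because $\E$ is ample on a curve, every nonzero quotient bundle of $\E$ has positive degree \cite{Harcurves}; applying this to the last graded piece of the Harder--Narasimhan filtration of $\E$ gives $\mu_{\min}(\E) > 0$ (indeed $\mu_{\min}(\E)\ge 1/\rk\E$). In characteristic $0$ one has $\mu_{\min}(\F\otimes\G)\ge \mu_{\min}(\F)+\mu_{\min}(\G)$ for vector bundles on a curve, a standard consequence of the fact that a tensor product of semistable bundles is semistable; hence $\mu_{\min}(\E^{\otimes m})\ge m\,\mu_{\min}(\E)$. Since the minimal slope is the smallest slope occurring among nonzero quotient bundles, the quotient line bundle $Q$ from the previous step satisfies $\deg Q = \mu(Q) \ge \mu_{\min}(\E^{\otimes m}) \ge m\,\mu_{\min}(\E)$. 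Combining this with $\deg Q\le d$, a nonzero section can exist only when $m\le d/\mu_{\min}(\E)$, so
$$m_0 := \max\bigl\{1,\ \lfloor d/\mu_{\min}(\E)\rfloor + 1\bigr\}$$
works, and it depends only on $d$ and $\E$.

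I do not expect a real obstacle: the genuine content of the statement is its uniformity over all line bundles $L$ of the given degree, and that is exactly what the slope formulation supplies for free, since $\mu_{\min}(\Sym^m\E)$ does not see $L$ at all. (Note that the purely qualitative fact that every quotient line bundle of $\Sym^m\E$ has positive degree is immediate from ampleness of $\Sym^m\E$; the point is to get the linear rate.) The one ingredient that is not entirely formal is the characteristic-zero fact that semistability — equivalently, the minimal slope — is well behaved under tensor, hence symmetric, powers. If one prefers to avoid the Harder--Narasimhan machinery, one can instead argue cohomologically: $\O_{\PP(\E)}(1)$ is ample and $\O_{\PP(\E)}(j)$ is nef for $j\ge 0$, so Fujita's vanishing theorem on $\PP(\E)$, combined with the projection formula and Serre duality on $C$, yields $H^1(C,\Sym^m\E\otimes M)=0$ — and dually the desired $H^0$-vanishing — for all $m$ sufficiently large, uniformly over line bundles $M$ of bounded degree; there the only mildly delicate point is the bookkeeping with a fixed small ample $\QQ$-twist of $\O_{\PP(\E)}(1)$.
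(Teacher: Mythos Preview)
Your argument is correct, but the main route you take is different from the paper's. The paper argues purely cohomologically: by Serre duality on $C$ and the Leray spectral sequence (equivalently, the projection formula) one rewrites $H^0(C,\Sym^m\E^*\otimes L)$ as $H^1(\PP(\E),\pi^*(K_C-L)\otimes\O_{\PP(\E)}(m))$, and then invokes Fujita's vanishing theorem on $\PP(\E)$ to get uniform vanishing for $m\gg 0$ depending only on $\deg(K_C-L)$, hence only on $d$. This is exactly the alternative you sketch in your last paragraph. Your primary proof instead passes through Harder--Narasimhan slopes: a nonzero section yields a quotient line bundle of $\E^{\otimes m}$ of degree at most $d$, whereas $\mu_{\min}(\E^{\otimes m})\ge m\,\mu_{\min}(\E)>0$ in characteristic~$0$. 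The slope argument gives an explicit $m_0$ in terms of $d$ and $\mu_{\min}(\E)$ and avoids Fujita, at the price of invoking the (standard but nontrivial) fact that semistability is preserved under tensor products in characteristic~$0$. The paper's route is shorter and packages the uniformity directly into Fujita's statement; both use characteristic~$0$ in the same place, namely the identification $(\Sym^m\E)^*\cong\Sym^m\E^*$.
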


%
%

%
\begin{proof}
Let $\PP(\E)$ denote the variety of hyperplanes in $\E$ with projection $\pi:Y\to C$. By the ampleness of $\E$, the line bundle $\O_{\PP(\E)}(1)$ is ample on $\PP(\E)$. Hence by Serre duality and the Leray spectral sequence,
\begin{eqnarray*}
 H^0(C,\Sym^m \E^* \otimes  \O_C(L))&=&H^1(C,\O_C(K_C-L)\otimes \Sym^m(\E))\\
 &=&H^1({\PP(\E)},\pi^*(K_C-L)\otimes \O(m))=0\end{eqnarray*}
 The last cohomology group vanishes for all $m\ge m_0$, where $m_0$ depends only on $d$ (e.g., by Fujita's vanishing theorem \cite{Laz04}).
\end{proof}

Note that proof uses the characteristic $0$ assumption  in the isomorphism $(\Sym^m \E^*)^*=\Sym \E$.



\begin{lemma}\label{h0bounded}
Let $C\subset X$ be a smooth curve with ample normal bundle and let $D$ be a pseudoeffective $\RR$-divisor on $X$ such that $D\cdot C=0$. Then for any ample divisor $H$, the function $h(t)= h^0(X,\O_X(\floor{tD}+H))$ is bounded.
\end{lemma}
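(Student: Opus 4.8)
The plan is to bound $h(t)$ by filtering the sections of $\O_X(\floor{tD}+H)$ according to their order of vanishing along $C$, and using the ampleness of $N_C$ (through Lemma \ref{h0vanishing}) to kill all the high-order graded pieces \emph{uniformly in $t$}. Write $\I=\I_C\subset\O_X$ for the ideal sheaf of $C$ and $\mathcal L_t=\O_X(\floor{tD}+H)$. The key preliminary observation is that, because $D\cdot C=0$, the degree $\delta(t):=\deg(\mathcal L_t|_C)=(\floor{tD}+H)\cdot C$ stays in a fixed bounded range: writing $D=\sum_i\lambda_iD_i$ we have $\floor{tD}\cdot C=tD\cdot C-\sum_i\{t\lambda_i\}(D_i\cdot C)=-\sum_i\{t\lambda_i\}(D_i\cdot C)$, which has absolute value at most $\sum_i|D_i\cdot C|$. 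Hence $\mathcal L_t|_C$ is a line bundle on $C$ whose degree lies in a finite set $S$ of integers, independent of $t$.

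Next I would use the $\I$-adic filtration of $\mathcal L_t$. Since $C$ is smooth (or lci) we have $\I^m/\I^{m+1}\cong\Sym^mN_C^*$, so the graded pieces of the filtration are sheaves supported on $C$, namely
$$(\I^m\mathcal L_t)/(\I^{m+1}\mathcal L_t)\cong\Sym^mN_C^*\otimes\mathcal L_t|_C.$$
Applying Lemma \ref{h0vanishing} to the ample bundle $N_C$ — once for each of the finitely many integers in $S$, and taking the maximum of the resulting constants — yields an integer $m_0$, depending only on $C$, $D$ and $H$, such that $H^0(C,\Sym^mN_C^*\otimes\mathcal L_t|_C)=0$ for every $m\ge m_0$ and every $t$. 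Plugging this into the long exact sequences attached to $0\to\I^{m+1}\mathcal L_t\to\I^m\mathcal L_t\to\Sym^mN_C^*\otimes\mathcal L_t|_C\to0$ shows that the natural inclusions $H^0(X,\I^{m+1}\mathcal L_t)\hookrightarrow H^0(X,\I^m\mathcal L_t)$ are isomorphisms for all $m\ge m_0$. A section lying in this common subspace vanishes to all orders along $C$, hence vanishes identically since $X$ is irreducible (Krull's intersection theorem at a point of $C$); therefore $H^0(X,\I^{m_0}\mathcal L_t)=0$.

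Filtering $H^0(X,\mathcal L_t)$ by the subspaces $H^0(X,\I^m\mathcal L_t)$ for $0\le m\le m_0$ now gives
$$h(t)=h^0(X,\mathcal L_t)\le\sum_{m=0}^{m_0-1}h^0\big(C,\Sym^mN_C^*\otimes\mathcal L_t|_C\big),$$
and it remains to bound each of these finitely many terms independently of $t$. For fixed $m$, $\Sym^mN_C^*$ is a fixed vector bundle on $C$, while $\mathcal L_t|_C$ ranges over line bundles whose degree lies in the finite set $S$; since each $\Pic^\delta(C)$ is proper and $h^0$ is upper semicontinuous in the line bundle, $h^0(C,\Sym^mN_C^*\otimes M)$ is bounded as $M$ varies over all line bundles of degree $\delta$, for every $\delta\in S$. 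Taking the (finite) maximum over $0\le m<m_0$ and $\delta\in S$ produces a bound for $h(t)$ valid for all $t$.

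The only place where the hypothesis $D\cdot C=0$ enters is the boundedness of $\delta(t)$; granting that, Lemma \ref{h0vanishing} supplies the uniform cutoff $m_0$ and the rest is the standard ideal-adic filtration argument. The step that requires a little care is the last one: one cannot bound $h^0$ of $\Sym^mN_C^*\otimes M$ by a crude ``degree plus rank'' estimate (these bundles are very negative, yet $M$ can be chosen so that $\Sym^mN_C^*\otimes M$ acquires a positive sub-line-bundle), so the semicontinuity-over-a-proper-base argument — or, equivalently, an estimate via the Harder--Narasimhan filtration, whose numerical type depends only on $m$ and $\deg M$ — is genuinely needed.
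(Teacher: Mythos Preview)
Your proof is correct and follows essentially the same approach as the paper: filter by powers of $\I_C$, use the bounded degree of $\mathcal L_t|_C$ together with Lemma~\ref{h0vanishing} to obtain a uniform cutoff $m_0$, and then bound the finitely many surviving graded pieces. You are more explicit than the paper on two points it leaves implicit---the Krull-intersection step showing $H^0(X,\I^{m_0}\mathcal L_t)=0$, and the justification (via semicontinuity over $\Pic^\delta(C)$) that each $h^0(C,\Sym^mN_C^*\otimes M)$ is bounded for $M$ of bounded degree.
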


\begin{proof}Let $I$ be the ideal sheaf of $C$ in $X$. Since $C$ is locally complete intersection, we have $I^{k}/I^{k+1}=\Sym^k N_C^*$. By taking global sections of the exact sequences
$$
0\to I^{k+1} (\floor{tD}+H)\to I^{k}(\floor{tD}+H) \to \Sym^k N_C^*\otimes \O_C(\floor{tD}+H)\to 0
$$for $k=0,1,\ldots$, we deduce that 
\begin{equation*}\label{h0sum}
h^0(X,\O_X(\floor{tD}+H))\le \sum_{k=0}^\infty h^0(C, \Sym^k N_C^* \otimes  \O_C(\floor{tD}+H))
\end{equation*}Note that we have $\floor{tD}\cdot C\le tD\cdot C=0$. So in particular, $\deg \O_C(\floor{tD}+H)$ is bounded above by some constant $K>0$ depending only on $D$ and $H$. 

By Lemma \ref{h0vanishing}, there is a $k_0\ge 1$ so that the cohomology groups on the right-hand side of \eqref{h0sum} vanish for $k\ge k_0$ and all $t$. In particular, 
\begin{equation*}\label{h0sum2}
h^0(X,\O_X(\floor{tD}+H))\le \sum_{k=0}^{k_0} h^0(C, \Sym^k N_C^* \otimes  \O_C(\floor{tD}+H))
\end{equation*}Moreover, as each of the terms on the right-and side are bounded above by a constant independent of $t$, we see that the same holds for $h^0(X,\O_X(\floor{tD}+H))$.
\end{proof}


With these results, we are now in position to prove Theorem 1 and 2.

\begin{proof}[Proof of Theorem \ref{interiortheorem}]It suffices to prove the theorem when $Y$ is a curve. Indeed, if $\dim Y\ge 2$ and  $A_1,\dots,A_{\dim Y-1}$ are sufficiently general, smooth, ample divisors, then $C=Y\cap A_1\cap \cdots \cap A_{\dim Y-1}$ will be a smooth curve and $D|_Y\equiv 0$ if and only if $D\cdot C=0$. Moreover, the normal bundle of $C$ is ample, because it is an extension of the ample vector bundles $N_{Y|X}|_C$ and $N_{C|Y}$ (see e.g., \cite[III.\S 1]{Har70}).

So suppose that $Y=C$ is a curve with ample normal bundle and let $D$ be a pseudoeffective $\RR$-divisor such that $D\cdot C=0$ and let $H$ be any ample divisor. By Lemma \ref{h0bounded}, we have that the dimensions of the cohomology groups $H^0(X,\O_X(\floor{tD}+H))$ are bounded above, so in particular $\nu(D)=0$. Moreover, if $D$ is nef, from the definition, $N_\sigma(D)=0$, so in particular $D\equiv 0$.\end{proof}

\begin{proof}[Proof of Theorem \ref{interiortheorem2}]
Let $C$ be a curve with ample normal bundle. By definition, the cone of curves $\overline{\NE}(X)\subset N_1(X)$ is dual to $\nef(X)\subset N^1(X)$. Hence, to show that the class of $C$ is in the interior of the cone of curves it suffices to show that if $D$ is a nef $\RR$-divisor such that $D\cdot C=0$, then $D\equiv 0$. But this is exactly the first part of Theorem \ref{interiortheorem}.

Suppose now that $C$ is strictly nef (i.e., $C\cdot D>0$ for all effective divisors $D$), we need to show that the class of $C$ is in the interior of the cone of \emph{movable  curves}, $\overline{\ME}(X)\subset N_1(X)$. By \cite{BDPP}, the movable cone is dual to the pseudoeffective cone, so we need only check that $C\cdot D>0$ for every pseudoeffective $\RR$-divisor which is not numerically trivial. Let $D$ be a pseudoeffective $\RR$-divisor such that $C\cdot D=0$. By Lemma \ref{slowgrowth} and Lemma \ref{h0bounded}, we have that $D\equiv N_\sigma(D)=\sum \sigma_\Gamma \Gamma$, so in particular also $N_\sigma(D)\cdot C=0$, contradicting the strictly nefness of $C$.\end{proof}
\def\mob{\operatorname{mob}}

\begin{remark} The paper \cite{Ott12} presents a definition of ampleness for subschemes of arbitrary codimension, generalizing the usual notion for divisors.  In short, a subscheme is defined to be \emph{ample} if  the exceptional divisor on the blow-up along the subscheme satisfies a certain partial positivity condition, namely that its asymptotic cohomology groups vanish in certain degrees (it is `$q$-ample' in the sense of \cite{Tot10}, with $q=\codim Y-1$). When $Y$ is smooth, or locally complete intersection, it is known that this condition implies that the normal bundle of $Y$ is ample and $Y$ is strictly nef \cite[Corollary 5.6]{Ott12}.

\end{remark}

\section{Proof of Theorem \ref{finitelymany}}Let $X$ be a smooth complex variety over $\CC$ and let $Y$ be a smooth subvariety with ample normal bundle and let $D\subset X$ be any effective divisor (reducible or non-reduced) such that $Y\cap D=\emptyset$. By Theorem \ref{interiortheorem}, $D$ must have numerical dimension 0, so in particular its Iitaka dimension $\kappa(D)$ is also 0. From this we have

\begin{lemma}\label{uniquelin}
Let $D$ be an effective divisor disjoint from $Y$. Then $H^0(X,\O_X(D))=\CC$, i.e., $D$ is the unique effective divisor in its linear equivalence class.
\end{lemma}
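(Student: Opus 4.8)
The plan is to read off the statement from the fact, noted just above, that $\kappa(D)=0$, together with $D$ being effective. The underlying point is that vanishing of the Iitaka dimension is extremely restrictive: it forces $h^0(X,\O_X(mD))=1$ for \emph{every} $m\ge 1$, and the case $m=1$ is exactly the assertion that $D$ is the unique effective divisor in its linear equivalence class.

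To prove this I would argue by contradiction. Suppose $s,t\in H^0(X,\O_X(D))$ are linearly independent. Since the ground field is algebraically closed, the rational function $f=t/s\in\CC(X)$ is non-constant and hence transcendental over $\CC$. For each $k\ge 1$ the products $s^k,\,s^{k-1}t,\,\dots,\,t^k$ lie in $H^0(X,\O_X(kD))$, and they are linearly independent: dividing any linear relation among them by $s^k$ would produce a nonzero polynomial of degree $\le k$ in $f$ vanishing identically on $X$, contradicting the transcendence of $f$. Hence $h^0(X,\O_X(kD))\ge k+1$ for all $k$, so $\kappa(D)\ge 1$ — contradiction. Therefore $h^0(X,\O_X(D))\le 1$, and since $D\ge 0$ the canonical section shows $h^0(X,\O_X(D))=1$, i.e. $H^0(X,\O_X(D))=\CC$.

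I do not anticipate a real obstacle; the only thing to be careful about is that $\kappa(D)=0$ in isolation merely bounds the plurigenera $h^0(X,\O_X(mD))$, and the elementary transcendence argument above is what pins the value down to $1$. If one prefers, the hypothesis $Y\cap D=\emptyset$ can be used more directly: it trivializes $\O_X(D)$ in a neighbourhood of $Y$, so restriction yields a linear map $H^0(X,\O_X(D))\to H^0(Y,\O_Y)=\CC$ sending the canonical section of $\O_X(D)$ to a nonzero scalar; were $h^0(X,\O_X(D))\ge 2$, this map would have a nonzero kernel, giving a section vanishing along $Y$ and hence an effective divisor linearly equivalent to $D$ but whose support meets $Y$, in particular distinct from $D$ — and the existence of two distinct members of $|D|$ again contradicts $\kappa(D)=0$ as in the previous paragraph. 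Either way, the one substantive ingredient is Theorem \ref{interiortheorem}.
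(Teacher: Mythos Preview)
Your proof is correct and follows exactly the approach the paper intends: the paper states just before the lemma that $\nu(D)=0$ forces $\kappa(D)=0$, and treats the implication $\kappa(D)=0 \Rightarrow h^0(X,\O_X(D))=1$ (for $D$ effective) as immediate without spelling it out. Your transcendence argument is precisely the standard justification for this last step, so you have simply made explicit what the paper leaves implicit.
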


%

The following lemma is the essential ingredient in the proof of Theorem \ref{finitelymany}. The idea of using the Albanese variety was inspired by an argument used by Totaro \cite{Tot00}.

\begin{lemma}
Let $Y\subset X$ be a smooth subvariety with ample normal bundle. Then the restriction map \begin{equation}\label{restrH1}
 H^1(X,\QQ)\to H^1(Y,\QQ)
\end{equation}
 is injective.\end{lemma}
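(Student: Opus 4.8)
The plan is to show that if a nonzero class $\alpha \in H^1(X,\QQ)$ restricts to zero on $Y$, we get a contradiction with Theorem \ref{interiortheorem}. The bridge between $H^1$ and divisor classes is the Albanese variety: let $a\colon X \to \Alb(X)$ be the Albanese map. Since $\Alb(X)$ is an abelian variety, and $H^1(X,\QQ) \cong H^1(\Alb(X),\QQ)$, a class in the kernel of the restriction map \eqref{restrH1} corresponds to a $1$-form (or a rational cohomology class) pulled back from $\Alb(X)$ that dies when restricted to $Y$. The geometric translation is: the composite $Y \hookrightarrow X \xrightarrow{a} \Alb(X)$ lands (after translation) in a proper abelian subvariety $B \subsetneq \Alb(X)$ cut out by the vanishing of that class.

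First I would make this precise: a nonzero $\alpha$ in the kernel produces, via Hodge theory over $\CC$, a nonzero holomorphic $1$-form $\omega$ on $\Alb(X)$ whose pullback to $X$ restricts to $0$ on $Y$; equivalently the image $a(Y)$ is contained in a translate of a proper subtorus $B = \ker(\Alb(X) \to A')$ for some nonzero quotient abelian variety $A'$ of positive dimension. Replacing $A'$ by a suitable quotient we may take $\dim A' = 1$, i.e. an elliptic curve, or at least we obtain a nonconstant map $g = p \circ a \colon X \to A'$ with $A'$ an abelian variety of positive dimension such that $g(Y)$ is a single point. Now choose any ample divisor $\Theta$ on $A'$ (a theta divisor, or a translate missing $g(Y)$). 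Then $D := g^*\Theta$ is an effective divisor on $X$: it is pseudoeffective, indeed effective, and we can arrange (by translating $\Theta$ away from the point $g(Y)$) that $D$ is disjoint from $Y$, so in particular $D|_Y \equiv 0$. By Theorem \ref{interiortheorem}, $\nu(D) = 0$. But $D = g^*\Theta$ with $g$ nonconstant onto a positive-dimensional target and $\Theta$ ample, so $D$ has Iitaka dimension $\kappa(D) = \dim g(X) \geq 1$, hence numerical dimension $\geq 1$ — a contradiction. Therefore the kernel is $0$ and \eqref{restrH1} is injective.

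The main obstacle is the passage from "$\alpha$ restricts to zero on $Y$" to "the Albanese image of $Y$ is degenerate in a way that produces a nontrivial effective pullback divisor disjoint from (or numerically trivial on) $Y$." Concretely, one must ensure that the subtorus $B$ containing $a(Y)$ is \emph{proper}, which follows from $\alpha \neq 0$; that the quotient $A'$ has positive dimension; and that the resulting $D = g^*\Theta$ can be taken effective with $D \cdot C = 0$ for the curve $C = Y \cap A_1 \cap \cdots \cap A_{\dim Y - 1}$ appearing in the reduction of Theorem \ref{interiortheorem} — this is automatic since $g$ contracts $Y$, hence contracts $C$, so $C$ maps to a point and $g^*(\text{anything})\cdot C = 0$. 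One subtlety: rather than contracting to a point one only needs $g|_Y$ nonconstant to \emph{fail}, i.e. $g(Y)$ lower-dimensional than expected; the cleanest route is to work with the $1$-form directly: $\omega|_Y = 0$ means every rational pullback divisor in the linear system $|g^*\Theta|$ restricts numerically trivially to $Y$ once we pick $\Theta$ through $g(Y)$ — but it is cleaner still to translate $\Theta$ off $g(Y)$ to get outright disjointness. With $D$ effective, disjoint from $Y$, and $\kappa(D) \geq 1$, Theorem \ref{interiortheorem} (via the curve reduction, or directly since $D|_Y \equiv 0$ forces $\nu(D) = 0$) gives the contradiction. This completes the argument.
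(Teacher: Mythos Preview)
Your argument is correct, and it shares the Albanese setup with the paper's proof: both observe that a nonzero kernel element in $H^1(X,\QQ)\to H^1(Y,\QQ)$ forces the image of $\Alb(Y)\to\Alb(X)$ to be a proper abelian subvariety, so that the composition $X\to\Alb(X)\to A'$ (with $A'$ the quotient) contracts $Y$ to a point. The divergence is in how the contradiction is extracted. You pull back an ample divisor $\Theta$ from $A'$ (translated off $g(Y)$) to get an effective $D$ with $D|_Y\equiv 0$ and $\kappa(D)=\dim g(X)\geq 1$, then invoke Theorem~\ref{interiortheorem} to force $\nu(D)=0$, contradicting $\kappa(D)\leq\nu(D)$. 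The paper instead argues locally and more directly: a nonconstant holomorphic function near the point $g(Y)\in A'$ pulls back to a nonzero section of $I_Y^m/I_Y^{m+1}=\Sym^m N_Y^*$ for some $m>0$, which is impossible since $N_Y$ is ample. The paper's route is more elementary and self-contained---it uses only the vanishing $H^0(\Sym^m N_Y^*)=0$ rather than the full strength of Theorem~\ref{interiortheorem}---while your route is a clean application of the main theorem already established, and makes the logical dependence on Theorem~\ref{interiortheorem} explicit (which is harmless, since the lemma is only used for Theorem~\ref{finitelymany}). One small wrinkle: your aside about reducing to $\dim A'=1$ is not generally possible (a simple abelian variety of dimension $>1$ has no elliptic quotient), but you correctly fall back to arbitrary positive-dimensional $A'$, and the argument goes through unchanged.
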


\begin{proof}This essentially follows since a subvariety with ample normal bundle can not be contracted to a point by a non-constant morphism. Fix a base-point on $Y$ and consider the map of Albanese varieties
\begin{equation*}
\alpha: \Alb(Y)\to \Alb(X).
\end{equation*}If  \eqref{restrH1} is not injective, then $\alpha$ is not surjective, i.e., the quotient abelian variety $B=\Alb(X)/\alpha(\Alb(Y))$ has positive dimension. Note that the composition$$Y\to X\to \Alb(X)\to B$$sends $Y$ to a point $b\in B$. Let $f$ be a non-constant holomorphic function in a neighbourhood of $b$, which vanishes at $b$. Note that the above composition pulls the function $f$ back to a global section of $H^0(Y,I_Y^m/I_Y^{m+1})$ for some $m>0$. But $I_Y^m/I_Y^{m+1}=\Sym^m N_C^*$ cannot have global sections if the  normal bundle of $Y$ is ample.
\end{proof}

In particular, this implies that the map of abelian varieties 
\begin{equation}\label{finiteker}
 \Pic^0(X)\to \Pic^0(Y)
\end{equation}has finite kernel.

\begin{lemma}\label{uniquenum}
Suppose $D_1,D_2$ are numerically equivalent effective divisors whose supports are disjoint from $Y$. Then $D_1$ and $D_2$ are equal as divisors.\end{lemma}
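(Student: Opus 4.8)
The plan is to reduce the statement to Lemma \ref{uniquelin} by showing that, after multiplication by a suitable positive integer, $D_1$ and $D_2$ become linearly equivalent. Set $L=\O_X(D_1-D_2)\in\Pic(X)$. Since $D_1\equiv D_2$, the line bundle $L$ is numerically trivial, so it lies in the subgroup $\Pic^\tau(X)\subset\Pic(X)$ of numerically trivial line bundles; over $\CC$ this group fits into an exact sequence $0\to\Pic^0(X)\to\Pic^\tau(X)\to T\to 0$ with $T$ finite. The disjointness hypothesis enters as follows: because the support of each $D_i$ avoids $Y$, a local equation of $D_i$ is a unit along $Y$, hence $\O_X(D_i)|_Y\cong\O_Y$ canonically, and therefore $L|_Y\cong\O_Y$. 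In other words $L$ lies in the kernel of the restriction map $\Pic(X)\to\Pic(Y)$.

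Now I would combine these two facts. Choose $n\ge 1$ with $L^{\otimes n}\in\Pic^0(X)$, which is possible since $T$ is finite. Then $L^{\otimes n}\in\ker(\Pic^0(X)\to\Pic^0(Y))$, and this kernel is finite by the preceding lemma (injectivity of $H^1(X,\QQ)\to H^1(Y,\QQ)$). Hence $L$ is a torsion element of $\Pic(X)$, say $L^{\otimes N}\cong\O_X$ for some $N\ge 1$, which is to say $ND_1\sim ND_2$. The divisors $ND_1$ and $ND_2$ are effective, linearly equivalent, and both disjoint from $Y$, so applying Lemma \ref{uniquelin} to $ND_2$ shows it is the unique effective divisor in its linear equivalence class; therefore $ND_1=ND_2$ as divisors, and since the group of divisors on $X$ is torsion-free we conclude $D_1=D_2$.

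The only point that requires more than formal bookkeeping is the torsion argument at the heart of the second paragraph: the injectivity lemma controls only the kernel of the map on $\Pic^0$, so to deduce that $L$ itself — rather than merely some power of $L$ that happens to land in $\Pic^0$ — is torsion, one genuinely needs the finiteness of $\Pic^\tau(X)/\Pic^0(X)$. Everything else (the identification $\O_X(D_i)|_Y\cong\O_Y$ and the final appeal to Lemma \ref{uniquelin}) is routine.
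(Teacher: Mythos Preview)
Your argument is correct and follows the same route as the paper: reduce numerical equivalence to algebraic equivalence using the finiteness of $\Pic^\tau(X)/\Pic^0(X)$ (which the paper invokes as Matsusaka's theorem), then use the finite kernel of $\Pic^0(X)\to\Pic^0(Y)$ to conclude that $\O_X(D_1-D_2)$ is torsion, and finish with Lemma~\ref{uniquelin}. The only difference is packaging---the paper splits the argument into an ``algebraically equivalent'' case and a ``numerically equivalent'' case, whereas you work directly with $L$ throughout.
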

\begin{proof}Suppose first that $D_1$ and $D_2$ are algebraically equivalent. By definition, the element $D_1-D_2$ defines an element of $\Pic^0(X)$. Note that $D_1-D_2$ restricts to $0$ in $\Pic^0(Y)$ (since both $D_1$ or $D_2$ are disjoint from $Y$). Since the kernel of \eqref{finiteker} is finite, this means that there is a positive integer $m>0$ such that $m(D_1-D_2)=0$ in $\Pic^0(X)$ and hence $mD_1$ and $mD_2$ are linearly equivalent. By Lemma \ref{uniquelin}, we have $mD_1=mD_2$, and also $D_1=D_2$.

If $D_1$ and $D_2$ are numerically equivalent, then by Matsusaka's theorem, there is an integer $m>0$ such that $mD_1$ and $mD_2$ are algebraically equivalent. Using the same argument again, we find that $D_1=D_2$.
\end{proof}

%
%
%
%
%
%
%

With this we can complete the proof of Theorem \ref{finitelymany}:
\begin{proof}[Proof of Theorem \ref{finitelymany}]After replacing $Y$ with an appropriate linear section in some projective embedding, we may assume that $Y$ is a smooth curve. We may also suppose that the Picard number $\rho$ is greater than 1, otherwise there is nothing to prove. 

Now take any distinct $\rho$ prime divisors $D_1,\ldots,D_\rho$ disjoint from $Y$. Since $D_i\cdot Y=0$ for $i=0,\ldots, \rho$, we see that the $D_i$ lie in a  rational hyperplane in $N^1(X)$. Hence after re-ordering the $D_i$, there is a relation of the form
$$
m_1D_1+\cdots+m_s D_s \equiv m_{s+1} D_{s+1}+\cdots+m_\rho D_\rho
$$where $m_i$ are non-negative integers. Now let $E$ (resp. $F$) denote the divisor on the left hand side (resp. right hand side) of this equation. Note that the supports of $E$ and $F$ are disjoint from $Y$, so by Lemma  \ref{uniquenum} the divisors $E,F$ are equal. This contradicts the assumption that the components $D_1,\ldots,D_\rho$ are different.
 \end{proof}
 


%

%
%
%
%
%

\bibliographystyle{alpha}

\end{document}